\numberwithin{equation}{section}
\newtheorem{thm}{Theorem}[section]
\newtheorem*{defn}{Definition}
\newtheorem*{ex}{Example}
\newtheorem{lem}[thm]{Lemma}
\newtheorem{prop}[thm]{Proposition}
\newtheorem*{note}{Notation}
\begin{document}

\title{Spin and Representations}
\author{Wonmyeong Cho}
\address{(Wonmyeong Cho) Department of Mathematical Sciences, Seoul National University,
\newline\indent Seoul, South Korea}
\email{mangoringo@snu.ac.kr}

\begin{abstract}
We derive the representation theory of $SU(2)$ from the expository theory of Lie groups and Lie algebras. Based on this, the mathematics of non-relativistic quantum mechanics of a spin $\frac{1}{2}$ particle are described from a representation-theoretic perspective, and are extended to many particle systems.
\end{abstract}

\maketitle


\bigskip\section{Lie Groups and Lie Algebras}

We begin by evaluating the Lie algebras for some matrix groups, which will give us an easier method to construct the desired representations at hand.

\begin{defn}\normalfont
A \textbf{Lie group} $G$ is a topological group that is also a manifold, such that the group operation $G \times G \rightarrow G$ is differentiable.
The \textbf{Lie algebra} of $G$ is defined to be the tangent space of $G$ at the identity element $e$, and is denoted as $\mathfrak{g}$.
\end{defn}

In physics, we are mainly interested in the matrix groups $U(n), SO(n), SU(n)$ as representations of the group of symmetries of a certain object.
For example, $SO(3)$ represents the group of 3-dimensional rotations.
Since the Lie algebra is a linearization of the Lie group, it is much easier to study than say, the corresponding neighborhood of the identity that we approximated.
\medskip\newline\indent Throughout, we assume $G$ is a finite-dimensional matrix group, hence multiplication in $\mathfrak{g}$ is defined as matrix multiplication. This, however, is not the operation that makes $\mathfrak{g}$ into an algebra, and in general, $\mathfrak{g}$ need not be closed under matrix multiplication.

\begin{lem}\normalfont
Let $X \in \mathfrak{g}, \epsilon \in \mathbb{R}$.
For sufficiently small $\epsilon$, there exists a group element of the form $$1 + \epsilon X + \sum_{k=2}^{\infty} c_{k}\epsilon^{k}X^{k}$$
where $c_{k}$ are arbitrary real coefficients.
\end{lem}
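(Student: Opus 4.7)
The plan is to take the desired element to be the matrix exponential
\[
e^{\epsilon X} := \sum_{k=0}^{\infty} \frac{\epsilon^k X^k}{k!},
\]
which is visibly of the stated form with the specific (not genuinely ``arbitrary'') coefficients $c_k = 1/k!$. The real content of the lemma is then the assertion that this exponential actually lies in $G$ for small $\epsilon$, which is the standard exponential map theorem for matrix Lie groups.

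I would first check the analytic preliminaries: the series defining $e^{\epsilon X}$ converges absolutely in any sub-multiplicative matrix norm, since $\|X^k\| \le \|X\|^k$ and the ordinary exponential series converges. Hence $e^{\epsilon X}$ is a well-defined smooth matrix-valued function with value $1$ at $\epsilon = 0$ and derivative $X$ there. Peeling off the $k = 0$ and $k = 1$ terms exhibits it in the form $1 + \epsilon X + \sum_{k \ge 2} c_k \epsilon^k X^k$ required by the statement.

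The main obstacle is verifying that $e^{\epsilon X} \in G$. Since $X \in \mathfrak{g}$ arises by definition as $\alpha'(0)$ for some smooth curve $\alpha : (-\delta, \delta) \to G$ with $\alpha(0) = 1$, my plan is to establish the Trotter-type identity
\[
e^{\epsilon X} = \lim_{n \to \infty} \alpha(\epsilon/n)^n,
\]
which presents $e^{\epsilon X}$ as a limit of points of $G$; since a matrix Lie group is a closed subgroup of $GL_n$, the limit remains in $G$. The technical heart is an error-control estimate: expand $\alpha(\epsilon/n) = 1 + (\epsilon/n) X + O(n^{-2})$ and bound the accumulated remainder in the $n$-fold product uniformly in $n$, using that the $O(n^{-2})$ correction contributes at most $O(n^{-1})$ in total. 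This is essentially a hands-on derivation of the Lie correspondence; alternatively, one may simply cite the exponential map theorem for matrix Lie groups as a black box from standard references, in which case the lemma follows in one line.
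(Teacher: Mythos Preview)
The paper gives no proof of this lemma; it is followed only by the notational remark that such an element is written $1 + \epsilon X + \mathcal{O}(\epsilon^{2})$. The intended justification is evidently the bare definition of $\mathfrak{g}$ as the tangent space at the identity: any $X \in \mathfrak{g}$ is $\alpha'(0)$ for some smooth curve $\alpha$ in $G$ with $\alpha(0)=1$, and then $\alpha(\epsilon) = 1 + \epsilon X + \mathcal{O}(\epsilon^{2})$ is the required group element. (Strictly speaking, the higher-order terms of a generic curve need not be powers of $X$, so your exhibited element $e^{\epsilon X}$ actually matches the lemma's literal statement better than the tangent-curve argument does.) The substantive difference is one of logical order. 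In the paper this lemma is the \emph{input} to Proposition~1.2(i): one takes the group element $1 + \tfrac{1}{N}X + \mathcal{O}(N^{-2})$ supplied by the lemma, raises it to the $N$th power, and passes to the limit to conclude $e^{tX} \in G$. Your argument runs in the opposite direction---you first establish $e^{\epsilon X} \in G$ via the Trotter limit $\lim_n \alpha(\epsilon/n)^n$ (or by citing the exponential map theorem), and only then exhibit $e^{\epsilon X}$ as the element the lemma asks for. This is correct and self-contained, but within the paper's architecture it renders the subsequent proof of Proposition~1.2(i) redundant, since its content has already been absorbed into your proof of the lemma.
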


\begin{note}\normalfont
By use of the big-O notation, denote such element as $1 + \epsilon X + \mathcal{O}(\epsilon^{2})$.
\end{note}

\begin{prop}\normalfont
Let $g \in G, X, Y \in \mathfrak{g}, t \in \mathbb{R}$. Then,
\begin{enumerate}[label=(\roman*)]
\item $e^{tX} \in G$
\item $XY-YX \in \mathfrak{g}$
\end{enumerate}
\end{prop}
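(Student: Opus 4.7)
The plan is to deduce (i) directly from the preceding lemma by a product-limit argument, and then to leverage (i) to exhibit a smooth curve in $\mathfrak{g}$ whose derivative at the origin is the commutator $XY-YX$.

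For part (i), I would fix $t \in \mathbb{R}$ and apply the lemma with $\epsilon = t/n$ for every sufficiently large $n$, producing group elements
\[
g_n \;=\; 1 + \tfrac{t}{n}X + \mathcal{O}(n^{-2}) \;\in\; G.
\]
Closure of $G$ under multiplication then gives $g_n^n \in G$. Expanding $n \log g_n = tX + \mathcal{O}(n^{-1})$, I would show that $g_n^n = \exp(n\log g_n)$ converges in the ambient matrix algebra to $e^{tX}$. Finally, invoking closedness of $G$ as a subset of matrices (standard for matrix Lie groups), one concludes that $e^{tX} \in G$.

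For part (ii), fix $X,Y \in \mathfrak{g}$. Part (i) yields $e^{sX}, e^{tY}, e^{-sX} \in G$ for every $s, t \in \mathbb{R}$, so for each fixed $s$ the map
\[
t \;\longmapsto\; e^{sX}\,e^{tY}\,e^{-sX}
\]
is a smooth curve in $G$ passing through the identity at $t=0$; its tangent vector there is $e^{sX} Y e^{-sX}$, which therefore lies in $\mathfrak{g}$. Consequently
\[
\beta(s) \;:=\; e^{sX}\, Y\, e^{-sX}
\]
is a smooth curve lying entirely in the vector subspace $\mathfrak{g}$ of the ambient matrix algebra. Because $\mathfrak{g}$ is finite-dimensional it is automatically closed, so $\beta'(0)$ must again belong to $\mathfrak{g}$; a direct matrix calculation will identify $\beta'(0) = XY - YX$.

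The main technical obstacle really lies in part (i): one must verify that the uncontrolled $\mathcal{O}(n^{-2})$ higher-order terms from the lemma do not accumulate destructively upon raising $g_n$ to the $n$-th power, which is exactly what the logarithmic estimate above is designed to make precise. Part (ii) is then conceptually clean and, pleasingly, sidesteps the usual subtlety of the classical commutator curve $e^{tX}e^{tY}e^{-tX}e^{-tY}$, whose first derivative at $t=0$ vanishes so that $XY-YX$ only appears at second order; by instead differentiating a curve that already lies inside $\mathfrak{g}$, the argument reduces to an ordinary derivative of a smooth curve in a finite-dimensional vector space.
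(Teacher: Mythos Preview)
Your proposal is correct and follows essentially the same route as the paper: part (i) via a product-limit of the lemma's approximants $g_n^n \to e^{tX}$, and part (ii) by first showing that the conjugates $e^{sX}Ye^{-sX}$ lie in $\mathfrak{g}$ (as tangent vectors to the curve $t\mapsto e^{sX}e^{tY}e^{-sX}$) and then differentiating in $s$. If anything, your version is more careful than the paper's, since you explicitly invoke closedness of $G$ for the limit in (i) and closedness of the finite-dimensional subspace $\mathfrak{g}$ for the derivative in (ii), both of which the paper leaves implicit.
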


\begin{proof} Observe that $\mathfrak{g}$ is a real vector space, thus it suffices to prove (i) for $X$.
\newline For sufficiently large $N \in \mathbb{N}$, there exists a group element of the form $1 + \frac{1}{N}X + \mathcal{O}(\frac{1}{N^{2}})$;
take the limit $N \rightarrow \infty$.
\smallskip\newline For (ii), define $\pi_{g}(t) = ge^{tX}g^{-1}$. The tangent vector at $t=0$ is an element of $\mathfrak{g}$, thus
$$\frac{d}{dt}\pi_g(t) \biggr\rvert_{t=0} = gXg^{-1} \in \mathfrak{g}$$
Take $g = e^{tY}$, then $e^{tY}Xe^{-tY} \in \mathfrak{g}$; differentiate at $t=0$ to obtain the desired result.
\end{proof}

\begin{defn}\normalfont
We define the \textbf{commutator} of $X, Y \in \mathfrak{g}$ as $[X,Y] = XY-YX$.
\end{defn}

\bigskip We are now ready to compute the Lie algebras of $U(n)$ and $SU(n)$.

\begin{lem}\normalfont
As vector spaces, $\text{dim}\, G = \text{dim}\, \mathfrak{g}$.
\end{lem}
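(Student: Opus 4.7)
The plan is to exhibit an explicit local diffeomorphism between a neighborhood of $0 \in \mathfrak{g}$ and a neighborhood of $e \in G$, from which the equality of dimensions follows immediately. The natural candidate is the exponential map $\exp \colon \mathfrak{g} \to G$ defined by $X \mapsto e^{X}$. Part (i) of the preceding proposition guarantees that this map is well-defined (i.e., actually lands in $G$), and the series defining it converges for every $X$ since we are working inside a matrix algebra.

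First I would verify that $\exp$ is smooth in a neighborhood of $0 \in \mathfrak{g}$; this is a standard fact about the matrix exponential, since each entry is an absolutely convergent power series in the entries of $X$. Next I would compute the derivative of $\exp$ at $0$. For any $X \in \mathfrak{g}$, the curve $t \mapsto e^{tX}$ in $G$ satisfies $\tfrac{d}{dt}\bigr|_{t=0} e^{tX} = X$, so the differential $d\exp_{0} \colon \mathfrak{g} \to T_{e}G = \mathfrak{g}$ is the identity map. In particular, $d\exp_{0}$ is invertible.

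Then I would invoke the inverse function theorem: there exist open neighborhoods $U$ of $0$ in $\mathfrak{g}$ and $V$ of $e$ in $G$ such that $\exp \colon U \to V$ is a diffeomorphism. Since diffeomorphic manifolds have the same dimension, and $U$ is an open subset of the real vector space $\mathfrak{g}$ (so $\dim U = \dim_{\mathbb{R}} \mathfrak{g}$) while $V$ is an open subset of the manifold $G$ (so $\dim V = \dim G$), we conclude $\dim G = \dim \mathfrak{g}$.

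The main obstacle, at the level of rigor this expository section is pitched at, is justifying the identification of the tangent space of $\mathfrak{g}$ at $0$ with $\mathfrak{g}$ itself and computing $d\exp_{0}$ cleanly; both are routine once one writes out $\exp$ as a power series, but they are the steps where a careless account could slip from the explicit matrix-group setting into unstated manifold machinery. Everything else is either a direct appeal to the inverse function theorem or a definitional unpacking.
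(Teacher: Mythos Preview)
Your argument is correct. The paper, however, does not actually prove this lemma: it simply invokes the general differential-topology fact that $\dim T_{p}M = \dim M$ for any manifold $M$, with a reference to Guillemin--Pollack. Since $\mathfrak{g}$ is \emph{defined} in the paper as $T_{e}G$, the lemma is then immediate from that fact. Your route is more hands-on: you use the exponential map (already shown to land in $G$ by the preceding proposition) together with the inverse function theorem to produce an explicit chart for $G$ near $e$, modeled on $\mathfrak{g}$. This has the virtue of staying inside the matrix-group setting and yielding exponential coordinates as a byproduct, at the cost of bringing in the inverse function theorem. One small remark: your final step, that diffeomorphic open sets have the same dimension, is itself essentially the same content as the fact the paper cites, so the two arguments are not as far apart as they might first appear.
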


This follows from a basic result in differential topology that tangent spaces of a manifold have the same dimension as the manifold itself.
A proof is in [GP], page 9.

\begin{thm}\normalfont
The Lie algebras of $U(n)$ and $SU(n)$ are given as follows:
$$\mathfrak{u}(n) = \{X \in M_{\mathbb{C}}(n) \mid X+X^{\dagger} = 0\}$$
$$\mathfrak{su}(n) = \{X \in M_{\mathbb{C}}(n) \mid X+X^{\dagger} = 0, \text{tr}\, X = 0\}$$
\end{thm}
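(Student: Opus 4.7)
The plan is to combine Proposition 1.3(i) with Lemma 1.6: the proposition will establish the inclusion $\mathfrak{g} \subseteq V$, where $V$ denotes the claimed set of matrices, and the lemma will close the argument by a real-dimension count.

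For the forward inclusion, I would take $X \in \mathfrak{u}(n)$; by Proposition 1.3(i), $e^{tX} \in U(n)$ for all $t \in \mathbb{R}$, so $e^{tX}(e^{tX})^{\dagger} = I$ identically in $t$. Since conjugate transposition commutes termwise with the exponential series, $(e^{tX})^{\dagger} = e^{tX^{\dagger}}$, and differentiating at $t = 0$ yields $X + X^{\dagger} = 0$. For $\mathfrak{su}(n)$, the additional identity $\det e^{tX} = e^{t\operatorname{tr} X} = 1$ forces $\operatorname{tr} X = 0$.

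For the reverse inclusion, I would invoke Lemma 1.6. The space $\{X : X + X^{\dagger} = 0\}$ is cut out of $M_{\mathbb{C}}(n) \cong \mathbb{R}^{2n^{2}}$ by linear equations constraining the diagonal entries to be purely imaginary ($n$ real parameters) and determining the lower-triangular entries from the upper-triangular ones ($n(n-1)$ real parameters), giving total real dimension $n^{2}$. The trace-zero condition then removes one further real parameter, since the trace of an anti-Hermitian matrix is already imaginary. Combined with $\dim U(n) = n^{2}$ and $\dim SU(n) = n^{2}-1$, Lemma 1.6 forces the forward inclusions to be equalities.

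The main obstacle I anticipate is justifying the real dimensions $\dim U(n) = n^{2}$ and $\dim SU(n) = n^{2}-1$ themselves; the standard derivation iterates Gram--Schmidt on the columns, but the text appears to take this for granted. An alternative that sidesteps the dimension count entirely is a direct construction: assuming $X + X^{\dagger} = 0$, the computation $(e^{tX})^{\dagger} = e^{-tX} = (e^{tX})^{-1}$ shows that $t \mapsto e^{tX}$ is a smooth curve lying in $U(n)$ with derivative $X$ at the identity, placing $X$ in $\mathfrak{u}(n)$ directly; the $\operatorname{tr} X = 0$ case adds $\det e^{tX} = 1$ to put the curve in $SU(n)$.
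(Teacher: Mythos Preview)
Your proposal is correct and matches the paper's approach almost exactly: one inclusion by differentiating the unitarity (respectively determinant) identity at $t=0$, the other by a real-dimension count via Lemma~1.6. The paper does in fact supply the computation $\dim U(n)=n^{2}$ that you flag as a potential gap (by counting the real constraints in $UU^{\dagger}=I$), and it handles $SU(n)$ through the intersection $\mathfrak{u}(n)\cap\mathfrak{sl}(n)$ rather than directly; your alternative curve argument $(e^{tX})^{\dagger}=e^{-tX}$ is not used there but would serve as a clean substitute for the dimension count.
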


\begin{proof}
We start by showing that $\text{dim}\, U(n) = n^{2}$.
\newline\smallskip The constraint $UU^{\dagger} = 1$ reads $\displaystyle\sum_{k}u_{ik}\bar{u}_{jk}=\delta_{ij}$;
\newline $i=j$ yields $1$ constraint in $\mathbb{R}$, and $i \neq j$ yields $1$ constraint in $\mathbb{R}$, $\mathbb{I}$, each.
\newline\smallskip Thus, $\text{dim}\, U(n) = 2n^{2} - n - 2 \times \displaystyle\frac{n(n-1)}{2} = n^{2}$.
\newline Next, given $X \in \mathfrak{u}(n), t \in \mathbb{R}$, we have $e^{tX}(e^{tX})^{\dagger} = e^{tX}e^{tX^{\dagger}} = 1$.
\newline\smallskip Differentiating at $t=0$ yields $X+X^{\dagger} = 0$. Define $C = \{X \in M_{\mathbb{C}}(n) \mid X+X^{\dagger} = 0\}$.
\newline Counting the degrees of freedom of $X$, we deduce $\text{dim}\, C = n^{2}$, thus $C = \mathfrak{u}(n)$.
\newline\smallskip For $\mathfrak{su}(n)$, we use the fact that $SU(n) = U(n) \cap SL(n)$ thus $\mathfrak{su}(n) = \mathfrak{u}(n) \cap \mathfrak{sl}(n)$.
\newline Given $X \in \mathfrak{sl}(n)$, $\text{det}\, (e^{X}) = e^{\text{tr}\, X} = 1$, thus $\text{tr}\, X = 0$.
\newline The space of traceless matrices and $SL(n)$ both have dimension $2n^{2}-2$; we are done.
\end{proof}


\section{$\mathfrak{su}(2)$ and Spin Representations}

We are particularly interested in $SU(2)$ in non-relativistic quantum mechanics as the symmetric group of the Hilbert space of a particle carrying spin.
What is spin, you ask? For now, I'll say that it is a special kind of angular momentum that is an instrinsic property of the particle.
In that sense, we expect $SU(2)$ to behave similarly to the group of rotations, $SO(3)$.
By deriving the fundamental commutator relations for the spin operators, we can finally begin to dig into the theory of spin.
Let's get into it!

\medskip First though, to be able to draw this connection between quantum mechanics and representations, we need to introduce some basic physical language.

\begin{defn}\normalfont
The \textbf{wavefunction} of a particle is any function that satiesfies the equation
$$\hat{H}|\psi\rangle = E|\psi\rangle$$
This eigenvalue equation is called the \textbf{Time-independent Schr\"odinger equation} (TISE for short) and is determined by the \textbf{Hamiltonian} operator $\hat{H}$ of the particle.
The eigenvalues $E$ of the equation are called the \textbf{energies} of the particle.
Any operator that commutes with the Hamiltonian is said to be \textbf{compatible}.
The associated eigenspace of the equation is called the \textbf{Hilbert space} of the particle.
\end{defn}

In essence, what we are trying to do is, instead of solving the Schr\"odinger equation directly (which is, for most of the time, near impossible),
try to find some set of compatible operators that give us information on what the Hilbert space looks like.
We have no interest in what form the wavefunctions take; they can be functions, they can be vectors, they can be cats and dogs, as long as they spit out an energy eigenvalue.

Just remember this; anything we do here has an analogue in the TISE picture, and solving the TISE is equivalent to knowing the action of the Hamiltonian on the Hilbert space.
Physicists seem to go to far lengths to solve that innocent-looking eigenvalue equation for some random particle whose existence is not even known; what we are about to do now is one of the cleaner methods of doing so.

\medskip From \textbf{Theorem 1.4}, we see that $\mathfrak{su}(2)$ is generated by
$$X_{1} = \frac{1}{2}\begin{pmatrix}0 & -i \\ -i & 0 \end{pmatrix}, X_{2} = \frac{1}{2}\begin{pmatrix}0 & -1 \\ 1 & 0 \end{pmatrix}, X_{3} = \frac{1}{2}\begin{pmatrix}-i & 0 \\ 0 & i \end{pmatrix}$$

\begin{defn}\normalfont
The \textbf{Pauli matrices} are $\sigma_{j} = 2iX_{j}$, and the \textbf{spin operators} are \newline $S_{j} = iX_{j} = \frac{1}{2}\sigma_{j}$, for $j = 1,2,3$;
the \textbf{ladder operators} are $S_{\pm} = S_{1} \pm iS_{2}$.
\end{defn}

The spin operators satisfy the following commutation relations:
$$[S_{i}, S_{j}] = i\epsilon_{ijk}S_{k} \indent [S_{3}, S_{\pm}] = \pm S_{\pm} \indent [S_{+}, S_{-}] = 2S_{3}$$

\begin{defn}\normalfont
The \textbf{complexification} of a Lie algebra $\mathfrak{g}$ spanned by basis $\mathcal{B}$ over $\mathbb{R}$
is defined as the space spanned by $\mathcal{B}$ over $\mathbb{C}$, and is denoted as $\mathfrak{g}_{\mathbb{C}}$.
\end{defn}

$\mathfrak{su}(2)_{\mathbb{C}} = \text{span}_{\mathbb{C}}\, \{X_{1}, X_{2}, X_{3}\} = \text{span}_{\mathbb{R}}\, \{X_{1}, X_{2}, X_{3}, S_{1}, S_{2}, S_{3}\} = \text{span}_{\mathbb{C}}\, \{S_{+}, S_{-}, S_{3}\}$

\begin{defn}\normalfont
Let $\mathfrak{g}$ be a Lie algebra and $V$ a vector space. A \textbf{Lie algebra representation} is a map ${\Phi}: \mathfrak{g} \rightarrow \mathfrak{gl}(V)$ between Lie algebras, such that, for all $a, b \in \mathbb{R}$, $X, Y \in \mathfrak{g}$:
\begin{enumerate}[label=(\roman*)]
\item $\Phi(aX+bY) = a\Phi(X) + b\Phi(Y)$
\item $\Phi([X, Y]) = [\Phi(X), \Phi(Y)]$
\end{enumerate}
\end{defn}

Before putting this definition to use, we first classify the irreps of $U(1)$.

\begin{lem}[Quantization Condition]\normalfont
Every irrep of $U(1)$ can be written as \newline $\phi_{n}: u \mapsto u^{n}$, where $n$ is an integer.
\end{lem}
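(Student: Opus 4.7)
The plan is to reduce the problem to a continuous one-parameter subgroup on $\mathbb{C}^{*}$ and then extract the integer from a periodicity constraint. Since $U(1)$ is abelian, a standard Schur-type argument shows every complex irrep $\phi$ is one-dimensional, so we may regard $\phi$ as a continuous homomorphism $\phi: U(1) \rightarrow \mathbb{C}^{*}$. Compactness of $U(1)$ forces the image to be a compact, hence bounded, subgroup of $\mathbb{C}^{*}$, which means $\phi$ actually lands in $U(1)$ itself.

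Next, I would pull back along the parametrization $\theta \mapsto e^{i\theta}$ to obtain a continuous homomorphism $f(\theta) = \phi(e^{i\theta})$ from $\mathbb{R}$ to $U(1)$. By differentiating $\phi$ at the identity, we obtain a Lie algebra map $d\phi: \mathfrak{u}(1) \rightarrow \mathfrak{gl}(1, \mathbb{C})$, which, since $\mathfrak{u}(1) \cong i\mathbb{R}$ is one-dimensional, is just multiplication by some scalar $\alpha \in \mathbb{C}$. Invoking Proposition 1.3(i), which identifies $\phi$ with the exponential of $d\phi$ on one-parameter subgroups, yields $f(\theta) = e^{\alpha \theta}$.

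Finally, the quantization: since $e^{i\cdot 2\pi} = 1$ in $U(1)$, we must have $f(2\pi) = \phi(1) = 1$, i.e.\ $e^{2\pi\alpha} = 1$, which forces $\alpha = in$ for some integer $n$. Substituting back, $\phi(e^{i\theta}) = e^{in\theta} = (e^{i\theta})^{n}$, giving the desired form $\phi_{n}$.

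The main obstacle I anticipate is the step identifying $f$ as an exponential. In this paper's Lie-theoretic framework, the cleanest route is to differentiate and invoke Proposition 1.3, but this quietly relies on $\phi$ being smooth; starting from mere continuity, one would need the classical fact that any continuous solution to the Cauchy functional equation $f(x+y) = f(x)f(y)$ is an exponential, which is a piece of analysis somewhat orthogonal to the Lie-algebra machinery developed so far. Once past this point, the periodicity argument is immediate and pins down the integer $n$.
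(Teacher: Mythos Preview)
Your argument follows the same three-beat structure as the paper's: Schur gives dimension one, differentiation gives an exponential, and periodicity at $\theta=2\pi$ forces the exponent to be an integer. Two remarks. First, what you invoke as Proposition~1.2(i) is not that statement --- Proposition~1.2(i) only asserts $e^{tX}\in G$; the identity $\phi(e^{X})=e^{\Phi(X)}$ you actually need is Theorem~2.2(i), which sits \emph{after} this lemma and would therefore be a forward reference. Second, the paper sidesteps that issue by doing precisely the elementary computation you relegate to your closing paragraph: it expands the difference quotient $\frac{\phi(e^{i(\theta+\epsilon)})-\phi(e^{i\theta})}{\epsilon}$, uses the homomorphism property to factor out $\phi(e^{i\theta})$, and reads off the ODE $\phi'=n\phi$ with $n=\phi'(0)$ without invoking any Lie-theoretic black box. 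Your compactness step (the image lands in $U(1)$) is correct but unnecessary, since the condition $e^{2\pi\alpha}=1$ already forces $\alpha\in i\mathbb{Z}$ on its own.
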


\begin{proof}
Elements of $U(1)$ are simply complex numbers such that $u\bar{u} = 1$, i.e. $u = e^{i\theta}$.
Since $U(1)$ is abelian, all its irreps have dimension 1, by Schur's lemma.
\newline Differentiating $\phi$ in terms of $\theta$, we have
$$\frac{d}{d\theta}\phi(e^{i\theta}) = \displaystyle\lim_{\epsilon \to 0} \frac{\phi(e^{i(\theta+\epsilon)})-\phi(e^{i\theta})}{\epsilon} = \phi(e^{i\theta}) \displaystyle\lim_{\epsilon \to 0} \frac{\phi(e^{i\epsilon})-\phi(1)}{\epsilon} = n\phi(e^{i\theta})$$
where $n = \phi'(0)$; thus $\phi(e^{i\theta}) = e^{in\theta}$, and $n$ is bounded by the homomorphism condition $e^{2\pi in} = 1$. The result follows.
\end{proof}

It must be commented on that it is not immediately obvious how this connects to the quantization derived from, say, boundary conditions of a wave function.
Well, what exactly do the boundary conditions signify? They are manifestations of the conditions that the representation of the associated Hamiltonian in Hilbert space must satisfy.
In other words, the periodicity of the wave function correlates directly to the cyclicity of $U(1)$.
We demonstrate this by example of the spin operator that lives in $SU(2)$.

\smallskip\indent Before doing so, we introduce a useful tool that allows us to go back and forth from Lie groups and Lie algebras.

\begin{thm}\normalfont Let $G$ be a simply connected Lie group with Lie algebra $\mathfrak{g}$.
\begin{enumerate}[label=(\roman*)]
\item Let $\phi: G \rightarrow GL(V)$ be a Lie group representation. Then, $\phi$ induces a Lie algebra representation $\Phi$ with the property that $\phi(e^X) = e^{\Phi(X)}$ for all $X \in \mathfrak{g}$, given by
$$\Phi(X) = \frac{d}{dt}\phi(e^{tX})\biggr\rvert_{t=0}$$
\item Every Lie algebra representation $\Phi: \mathfrak{g} \rightarrow \mathfrak{gl}(V)$ arises in this manner.
\end{enumerate}
\end{thm}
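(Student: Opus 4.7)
The plan is to treat (i) and (ii) as differentiation and integration respectively, with part (ii) being the genuinely hard direction where simple-connectedness enters.

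For part (i), I would first verify that the prescribed formula $\Phi(X) = \frac{d}{dt}\phi(e^{tX})|_{t=0}$ defines an element of $\mathfrak{gl}(V)$; this is immediate since $t \mapsto \phi(e^{tX})$ is a smooth curve in $GL(V)$ through the identity. Next I would establish the exponential formula $\phi(e^X) = e^{\Phi(X)}$ early, because it implies everything else cleanly. The key observation is that $\gamma(t) := \phi(e^{tX})$ is a one-parameter subgroup of $GL(V)$, since $\phi$ is a group homomorphism and $e^{(t+s)X} = e^{tX}e^{sX}$. Any such one-parameter subgroup of $GL(V)$ is of the form $e^{tA}$ with $A = \gamma'(0) = \Phi(X)$, so $\gamma(t) = e^{t\Phi(X)}$ and specialization to $t=1$ gives the claim.

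With the exponential formula in hand, $\mathbb{R}$-linearity of $\Phi$ reduces to a computation: homogeneity $\Phi(aX) = a\Phi(X)$ follows from rescaling $t$, and additivity follows from the Lie product formula $e^{t(X+Y)} = \lim_{n\to\infty}(e^{tX/n}e^{tY/n})^n$ pushed through $\phi$, which collapses to $e^{t(\Phi(X)+\Phi(Y))}$; differentiating at $t=0$ yields $\Phi(X+Y) = \Phi(X)+\Phi(Y)$. For the bracket relation, I would use the conjugation trick from Proposition 1.3: write $\phi(e^{tX}) e^{s\Phi(Y)} \phi(e^{-tX}) = e^{s\, \phi(e^{tX})\Phi(Y)\phi(e^{-tX})}$, but also $= \phi(e^{tX} e^{sY} e^{-tX}) = e^{\Phi(s\,e^{tX}Ye^{-tX})}$ (the latter by the exponential formula applied to the curve $s \mapsto e^{s\cdot e^{tX}Ye^{-tX}}$). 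Comparing infinitesimal generators gives $\phi(e^{tX})\Phi(Y)\phi(e^{-tX}) = \Phi(e^{tX}Ye^{-tX})$, and differentiating at $t=0$ produces $[\Phi(X),\Phi(Y)] = \Phi([X,Y])$.

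Part (ii) is the main obstacle, and simple-connectedness is indispensable (without it, the double cover $SU(2)\to SO(3)$ already supplies a counterexample via the spin representations). My approach would be the graph-subgroup construction. Given $\Phi: \mathfrak{g} \to \mathfrak{gl}(V)$, form the subspace $\mathfrak{h} = \{(X,\Phi(X)) : X \in \mathfrak{g}\} \subset \mathfrak{g} \oplus \mathfrak{gl}(V)$. The hypothesis that $\Phi$ preserves brackets is exactly what makes $\mathfrak{h}$ a Lie subalgebra of the Lie algebra of $G \times GL(V)$. Invoking the Lie subgroup theorem, there is a unique connected immersed Lie subgroup $H \hookrightarrow G \times GL(V)$ with Lie algebra $\mathfrak{h}$. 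The projection $\pi_1: H \to G$ is a Lie group homomorphism whose differential at the identity is the isomorphism $(X,\Phi(X)) \mapsto X$, so $\pi_1$ is a local diffeomorphism and therefore a covering map onto its image, which is open and, by connectedness of $G$, all of $G$. Here simple-connectedness of $G$ forces $\pi_1$ to be a diffeomorphism: any covering of a simply connected space is trivial. Composing the inverse $G \to H$ with the second projection $\pi_2: H \to GL(V)$ yields the desired $\phi$, and by construction its differential at the identity recovers $\Phi$. The hardest technical step to justify carefully is the Lie subgroup theorem itself, which ultimately rests on Frobenius's theorem for integrable distributions; I would cite this rather than reprove it.
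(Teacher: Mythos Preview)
Your proposal is correct, and in fact it does considerably more than the paper: the paper does not prove this theorem at all, but defers both parts to Hall's textbook (Theorem~3.28 for (i), Theorem~5.6 for (ii)). So there is no in-paper proof to compare against directly; the relevant comparison is with Hall.

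For part (i), your argument via one-parameter subgroups plus the conjugation identity is essentially Hall's. For part (ii), you take a genuinely different route. Hall's Theorem~5.6 constructs $\phi$ locally using the Baker--Campbell--Hausdorff formula (which guarantees that $e^{X} \mapsto e^{\Phi(X)}$ is well-defined and multiplicative on a neighbourhood of the identity), then extends globally along paths and invokes simple-connectedness to show path-independence via a monodromy argument. Your graph-subgroup construction instead packages the problem into the Lie subgroup theorem (hence Frobenius), and simple-connectedness enters as the triviality of the covering $\pi_1: H \to G$. Both are standard; your approach is cleaner conceptually but imports heavier machinery (integrable distributions), whereas Hall's stays within the matrix-group toolkit the paper has been using. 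One small point worth tightening in your write-up: the step ``local diffeomorphism and therefore a covering map'' is not automatic in general; here it follows because $\pi_1$ is a surjective Lie group homomorphism with discrete kernel, which you should state explicitly.
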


The proof is left to [H], page 60, Theorem 3.28 for (i) and page 119, Theorem 5.6 for (ii).
Since $SU(2)$ is homeomorphic to the 3-sphere (write out the constraints for $U^{\dagger}U=UU^{\dagger}=1$), it is simply connected and thus we may apply the above result.

\begin{thm}[The Spin Representation]\normalfont
For every nonnegative half-integer $s$, there is a unique irrep of $SU(2)$ of dimension $2s+1$, which induces a representation of $\mathfrak{su}(2)$ of the same dimension, given by
$$\Phi_{s}(S_{3}) = \begin{pmatrix}s & 0 & \dots & 0 \\ 0 & s-1 & \dots & 0 \\ \vdots & \vdots & \ddots & \vdots \\ 0 & 0 & \dots & -s \end{pmatrix},$$
$$\Phi_{s}(S_{+}) = \begin{pmatrix}0 & b_{s-1} & 0 & \dots & 0 \\ 0 & 0 & b_{s-2} & \dots & 0 \\ \vdots & \vdots & \vdots & \ddots & \vdots \\ 0 & 0 & 0 & \dots & b_{-s} \\ 0 & 0 & 0 & \dots & 0 \end{pmatrix},
\Phi_{s}(S_{-}) = \begin{pmatrix}0 & 0 & \dots & 0 & 0 \\ b_{s-1} & 0 & \dots & 0 & 0 \\ 0 & b_{s-2} & \dots & 0 & 0 \\ \vdots & \vdots & \ddots & \vdots & \vdots \\ 0 & 0 & \dots & b_{-s} & 0 \end{pmatrix}$$
\smallskip\newline where $b_{m} = \sqrt{s(s+1) - m(m+1)}$, for $m = -s, \dots, s-1$ are chosen for convenience.
\end{thm}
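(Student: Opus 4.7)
The plan is to classify the finite-dimensional irreducible representations of $\mathfrak{su}(2)_{\mathbb{C}}$ by the highest-weight method and then lift each to an irrep of $SU(2)$ via \textbf{Theorem 2.5}. Passing to the complexification is essential: the ladder operators $S_\pm$ live in $\mathfrak{su}(2)_{\mathbb{C}}$ but not in $\mathfrak{su}(2)$ itself, and I will need the existence of a complex eigenvalue of $\Phi(S_3)$.

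Let $\Phi$ be an irrep on a finite-dimensional complex space $V$. Pick any $\Phi(S_3)$-eigenvector; since $[S_3, S_\pm] = \pm S_\pm$ shows that $\Phi(S_\pm)$ shift $S_3$-eigenvalues by $\pm 1$, finite-dimensionality produces a nonzero highest-weight vector $v_0$ with $\Phi(S_+) v_0 = 0$ and $\Phi(S_3) v_0 = s v_0$ for some $s \in \mathbb{C}$. Setting $v_k = \Phi(S_-)^k v_0$, a short induction using $[S_+, S_-] = 2 S_3$ yields
$$\Phi(S_+) v_k = k(2s - k + 1) \, v_{k-1}.$$
Taking $N$ to be the smallest index with $v_N = 0$ and applying $\Phi(S_+)$ to it forces $2s - N + 1 = 0$, so $s = (N-1)/2$ is a nonnegative half-integer and $\dim V = 2s + 1$. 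The span of $v_0, \dots, v_{2s}$ is $\Phi$-invariant, hence equals $V$ by irreducibility, and any other $(2s+1)$-dimensional irrep is isomorphic via $v_k \mapsto v_k'$, giving uniqueness up to isomorphism.

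For existence, on $\mathbb{C}^{2s+1}$ I would declare the matrices exactly as in the statement and verify the three commutation relations directly; the only nontrivial check reduces to the identity $b_{m-1}^2 - b_m^2 = 2m$, which is immediate from $b_m^2 = s(s+1) - m(m+1)$. Irreducibility then follows because any nonzero invariant subspace contains an $S_3$-eigenvector, hence the top basis vector (by raising with $\Phi_s(S_+)$), hence every basis vector (by lowering with $\Phi_s(S_-)$). Since $SU(2) \cong S^3$ is simply connected, \textbf{Theorem 2.5(ii)} lifts this $\mathfrak{su}(2)$-representation to the desired $SU(2)$-representation, completing the argument.

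The main obstacle I anticipate is reconciling the abstract basis $\{v_k\}$, whose action is asymmetric ($\Phi(S_-) v_k = v_{k+1}$ but $\Phi(S_+) v_k = k(2s-k+1) v_{k-1}$), with the symmetric matrix form in the statement where identical coefficients $b_m$ appear in both $\Phi_s(S_+)$ and $\Phi_s(S_-)$. This demands a careful rescaling $w_k = v_k / c_k$ with $c_k = \prod_{j=0}^{k-1} b_{s-1-j}$, or, equivalently, endowing $V$ with the Hermitian inner product that makes $\Phi(S_3)$ self-adjoint and $\Phi(S_+)^\dagger = \Phi(S_-)$ and then taking the $w_k$ to be an orthonormal $S_3$-eigenbasis; either route delivers the stated $b_m$ entries after a direct bookkeeping computation.
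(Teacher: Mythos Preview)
Your argument is correct but follows a genuinely different route from the paper. You run the standard highest-weight classification entirely inside $\mathfrak{su}(2)_{\mathbb{C}}$: pick a top $S_3$-eigenvector, lower, and read off half-integrality of $s$ and the symmetric spectrum $\{s,s-1,\dots,-s\}$ from the termination condition $N(2s-N+1)=0$; only at the end do you invoke simple connectedness to lift to $SU(2)$ (the result you label Theorem~2.5 is Theorem~2.2 here). The paper instead stays on the group side throughout: starting from an $SU(2)$-irrep $\phi$, it restricts to the $U(1)$ subgroup $\{e^{2i\theta S_3}\}$ and uses the quantization Lemma~2.1 to force the $\Phi(S_3)$-eigenvalues to be half-integers \emph{a priori}, then composes $\phi$ with $\det$ to get a one-dimensional (hence trivial) representation, so $\operatorname{tr}\Phi(S_3)=0$ and the spectrum is symmetric about $0$. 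Your approach is cleaner and is the one that generalizes to arbitrary semisimple Lie algebras; the paper's approach buys an explicitly physical reading---quantization comes from the periodicity of $U(1)$, and the spectral symmetry from the unimodularity of $SU(2)$---which is the expository point the surrounding text is making. A minor mirror-image difference: the paper builds up from a lowest-weight vector with $\Phi(S_+)$, whereas you build down from a highest-weight vector with $\Phi(S_-)$. Both proofs defer the rescaling that yields the symmetric $b_m$ entries, though you are more explicit about what that rescaling entails.
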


\begin{proof}
Let $\phi: SU(2) \rightarrow GL(V)$ be an irrep with finite dimension.
\smallskip\newline We begin with $S_3$; notice that $e^{2i\theta S_{3}} = \begin{pmatrix}e^{i\theta} & 0 \\ 0 & e^{-i\theta}\end{pmatrix}$,
\smallskip\newline thus the subgroup it generates in $SU(2)$ is isomorphic to $U(1)$.
\newline Choose a basis where $\phi(e^{2i\theta S_{3}})$ is diagonal, and using \textbf{Theorem 2.2}(i),
\newline we see that $\Phi(S_{3})$ is a diagonal matrix consisting of half integers.
\newline Label the diagonal entries $a_{i}$, where $i = 0, 1, \dots, n$ in non-decreasing order.
\smallskip\newline Here's the catch: given an eigenvector $x_{i}$ of $\Phi(S_{3})$ with eigenvalue $a_{i}$, we have
$$\Phi(S_{3})(\Phi(S_{+})x_{i}) = (a_{i}+1)(\Phi(S_{+})x_{i}), \Phi(S_{3})(\Phi(S_{-})x_{i}) = (a_{i}-1)(\Phi(S_{-})x_{i})$$
$$[\Phi(S_{3}), \Phi(S_{+})]x_{i} + \Phi(S_{+})\Phi(S_{3})x_{i} = \Phi(S_{3})\Phi(S_{+})x_{i} = (1+a_{i})\Phi(S_{+})x_{i}$$
The calculation is similar for $\Phi(S_{-})$. This tells us that, since the set of eigenvalues $\{a_{i}\}$ is bounded by $[a_0, a_n]$, $\Phi(S_{+})x_{n}$ and $\Phi(S_{-})x_{0}$ are zero vectors.
\smallskip\newline \textbf{Claim:} V is spanned by $\mathcal{B}_{+} = \{x_{0}, \Phi(S_{+})(x_{0}), (\Phi(S_{+}))^{2}(x_{0}), \dots\}$.
\smallskip\newline Note that the set is finite since it terminates eventually. It suffices to show that the space spanned by $\mathcal{B}_{+}$ is closed under action of $\mathfrak{su}(2)_{\mathbb{C}} = \text{span}_{\mathbb{C}}\, \{S_{+}, S_{-}, S_{3}\}$,
for then it follows that $\text{span}\, \mathcal{B}_{+}$ is a $SU(2)$-linear subspace of $V$, thus equals $V$.
Here, we use the fact that every $U \in SU(2)$ is of the form $U = e^{X}$ for some $X \in \mathfrak{su}(2)$ .
\smallskip\newline The case is closed for $\Phi(S_{3})$ and $\Phi(S_{+})$. For $\Phi(S_{-})$, induct on the power of $\Phi(S_{+})$:
\newline $\Phi(S_{-})(x_{0}) = 0$ for the base case; for the general case, observe
$$\Phi(S_{-})\Phi(S_{+}) = \Phi(S_{+})\Phi(S_{-}) - [\Phi(S_{+}), \Phi(S_{-})] = \Phi(S_{+})\Phi(S_{-}) - 2\Phi(S_{3})$$
Apply both sides to $(\Phi(S_{+}))^{m-1}(x_{0})$ to obtain the desired result.
\smallskip\newline The claim allows us to write $a_{i} = a_{0}+i$. Now, consider the case $n = 0$.
\newline Here, $x_{0} = x_{n}$, thus $V = \text{span}\, \{x_{0}\}$, and we have $\Phi(S_{+}) = \Phi(S_{-}) = \Phi(S_{3}) = 0$.
\newline It follows that $\phi$ is trivial, since $\phi(e^{X}) = e^{\Phi(X)} = 1$ for all $X \in \mathfrak{g}$.
\smallskip\newline Let $V$ have dimension $n+1$ for any positive integer $n$.
\newline Composition of $\phi$ with the determinant representation $\mathfrak{D}: GL(V) \rightarrow GL_{1}(\mathbb{C}) \simeq \mathbb{C}$ given by $X \mapsto \text{det}\, X$ yields a 1-dimensional representation $\mathfrak{D} \circ \phi: SU(2) \rightarrow GL_{1}(\mathbb{C})$.
\newline This representation is irreducible, being of dimension 1, thus is trivial.
\newline Since $\text{det}\, \phi(e^{X}) = e^{\text{tr}\, \Phi(X)} = 1$, we see that $\Phi(X)$ is traceless for all $X \in \mathfrak{g}$.
\smallskip\newline In particular, $\displaystyle\sum_{i=0}^{n}a_{i} = 0$, thus $a_{i} = -a_{n-i}$, which gives us $S_{3}$.
\smallskip\newline Constructing $S_{\pm}$ from the eigenvector condition, we are done.
\end{proof}

From the above proof, notice that $a_{n} = \frac{n}{2}$ can be any nonnegative half integer, and the dimension of the representation is given by $2a_{n}+1$ (labeled `$s$' in the statement).
The factors $b_{m}$ are arbitrary constants that melt into the eigenvector; they are chosen such that the theory of spin mimics that of angular momentum.

\begin{defn}\normalfont
From the statement of \textbf{Theorem 2.3}, this Lie algebra representation characterizes the Hilbert space of a particle with \textbf{spin} $s$, and thus is called the is called the \textbf{spin $s$ representation}.
The eigenvectors $x_{i}$ are called \textbf{states}, and we denote them as bras $\langle sm|$ and kets $|sm\rangle$ for column and row vectors, respectively, where $m$ is the associated eigenvalue of $\Phi_{s}(S_{3})$ and is called the \textbf{magnetic quantum number}.
\smallskip\newline An \textbf{observable} is a Hermitian operator that has $V$ as its eigenspace.
It is called so because it has real eigenvalues and thus will have a physical manifestation.
Any wave function associated to an observable compatible with the Hamiltonian of this particle will be a linear combination of these states.
\end{defn}

We now have that $\Phi(S_{+})|sm\rangle \propto |s(m+1)\rangle, \Phi(S_{-})|sm\rangle \propto |s(m-1)\rangle$.
\newline Finally, we define a new observable, called the spin-squared operator.
$$S^{2} := S_{1}^{2} + S_{2}^{2} + S_{3}^{2} = S_{+}S_{-} - S_{3} + S_{3}^{2}$$
Notice that it shares simultaneous eigenvectors as $S_{3}$, hence $[S^{2}, S_{3}] = 0$.
\smallskip\newline\indent We summarize our results below:

\begin{thm}\normalfont
The action of the spin operators on the states are given as follows:
$$S_{3}|sm\rangle = m|sm\rangle, S^{2}|sm\rangle = s(s+1)|sm\rangle$$
$$S_{\pm}|sm\rangle = \sqrt{s(s+1) - m(m\pm1)}|s(m\pm1)\rangle$$
\end{thm}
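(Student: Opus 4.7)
The plan is to obtain the three identities by combining the explicit matrix description of $\Phi_s$ in \textbf{Theorem 2.3} with the operator equation $S^2 = S_+ S_- - S_3 + S_3^2$ already recorded just above the theorem. The $S_3$- and $S_\pm$-actions amount to reading off entries of the matrices presented in the theorem, while the $S^2$-action follows by a short algebraic manipulation using the other two.

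For $S_3$, the state $|sm\rangle$ is by construction the eigenvector of $\Phi_s(S_3)$ with eigenvalue $m$, so $S_3 |sm\rangle = m|sm\rangle$ requires no argument. For $S_\pm$, I would fix the convention that $|sm\rangle$ occupies the $(s+1-m)$-th slot of the basis in which $\Phi_s(S_3)$ is the diagonal matrix with entries $s, s-1, \dots, -s$. The superdiagonal of $\Phi_s(S_+)$ then yields $S_+|sm\rangle = b_m|s(m+1)\rangle$, and the subdiagonal of $\Phi_s(S_-)$ yields $S_-|sm\rangle = b_{m-1}|s(m-1)\rangle$. Substituting $b_m = \sqrt{s(s+1) - m(m+1)}$ and rewriting $b_{m-1} = \sqrt{s(s+1) - m(m-1)}$ merges the two cases into the stated $\pm$-uniform formula; the convention already kills $S_+|ss\rangle$ and $S_-|s(-s)\rangle$ automatically since the square roots vanish at those endpoints.

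For $S^2$, I apply $S^2 = S_+ S_- - S_3 + S_3^2$ to $|sm\rangle$ termwise. The piece $S_3^2 - S_3$ contributes $(m^2 - m)|sm\rangle$, and using the $S_-$- and $S_+$-formulas in succession, the two square roots multiply to give $S_+S_-|sm\rangle = (s(s+1) - m(m-1))|sm\rangle$. Summing and cancelling the $m^2$- and $m$-terms produces $S^2|sm\rangle = s(s+1)|sm\rangle$, as required.

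The only real obstacle is the bookkeeping: the entries of $\Phi_s(S_\pm)$ are indexed by $b_{s-k}$ along the diagonals, while the states are labelled by the eigenvalue $m$, so keeping this correspondence straight is what decides whether $b_m$ or $b_{m-1}$ appears in each ladder formula and ensures the $\pm$-uniform form of the final expression. Once that indexing is pinned down, every remaining step is mechanical.
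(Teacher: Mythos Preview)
Your proposal is correct and follows exactly the route the paper intends: the theorem is stated there as a summary of \textbf{Theorem~2.3} together with the identity $S^{2}=S_{+}S_{-}-S_{3}+S_{3}^{2}$, with no separate proof given. Your argument simply spells out the bookkeeping the paper leaves implicit, and the indexing you track (so that $S_{+}$ picks up $b_{m}$ and $S_{-}$ picks up $b_{m-1}$) is the correct one.
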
\medskip


\section{Many Particle Systems and Clebsch-Gordan Coefficients}

We are now ready to construct the theory for many particle systems.
First, consider what space is spanned by the states of a two-particle system, each with spin $s_{1}$, $s_{2}$.
Each pair of two individual state $|s_{1}m_{1}\rangle|s_{2}m_{2}\rangle$ is a state of the system, thus our space has as basis all such states;
thus naturally, we construct the two-particle Hilbert space as the tensor product of the two single particle Hilbert spaces.

\smallskip\indent The generalization to any system consisting of a finite number of particles is straightforward.
To avoid messy notation and ellipses, I will proceed with two-particle systems, but keep in mind that all these concepts generalize to any finite number of particles.

\begin{defn}\normalfont
Let a system $X$ be composed of particles 1, 2 with spin $s_{1}$, $s_{2}$, respectively.
We denote the spin operators of each system by $S_{i}^{(1)}$, $S_{i}^{(2)} (i = 1,2,3,\pm)$, respectively.
\newline The total spin operators are defined as $S_{i} = S_{i}^{(1)} \otimes 1 + 1 \otimes S_{i}^{(2)} (i = 1,2,3, \pm)$, 
\newline and the total spin-squared operator is then calculated as
$$S^{2} = S_{1}^{2} + S_{2}^{2} + S_{3}^{2} = S^{2^{(1)}} \otimes 1 + 1 \otimes S^{2^{(2)}} + 2(S_{1}^{(1)} \otimes S_{1}^{(2)} + S_{2}^{(1)} \otimes S_{2}^{(2)} + S_{3}^{(1)} \otimes S_{3}^{(2)})$$
\end{defn}

\begin{prop}\normalfont
The total spin operators satisfy the commutation relations
$$[S_{i}, S_{j}] = i\epsilon_{ijk}S_{k} \indent [S_{3}, S_{\pm}] = \pm S_{\pm} \indent [S_{+}, S_{-}] = 2S_{3}$$
and therefore induce a basis of states $|sm\rangle$, where $s$ runs from $|s_{1} - s_{2}|$ to $s_{1} + s_{2}$.
\end{prop}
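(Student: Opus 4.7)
The proof naturally separates into two pieces: a mechanical verification of the commutation relations, and then a structural argument using them to extract the claimed basis of total-spin eigenstates.

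For the commutation relations, I would simply expand. Writing $S_i = S_i^{(1)} \otimes 1 + 1 \otimes S_i^{(2)}$ and using bilinearity of the bracket, $[S_i, S_j]$ produces four terms; two of them, namely $[S_i^{(1)} \otimes 1,\, 1 \otimes S_j^{(2)}]$ and its partner, vanish because operators acting on distinct tensor factors commute. The remaining two collapse via the single-particle relations of Theorem 2.3 into $i\epsilon_{ijk}(S_k^{(1)} \otimes 1 + 1 \otimes S_k^{(2)}) = i\epsilon_{ijk}S_k$. The relations for $S_\pm$ are then formal consequences of $[S_i, S_j] = i\epsilon_{ijk}S_k$, exactly as in Section 2.

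With the commutation relations secured, the map $X_i \mapsto \Phi_{s_1}(X_i) \otimes 1 + 1 \otimes \Phi_{s_2}(X_i)$ defines a Lie algebra representation of $\mathfrak{su}(2)$ on $V_{s_1} \otimes V_{s_2}$, which exponentiates to an $SU(2)$ representation by Theorem 2.2(ii). Being finite-dimensional and unitary, it decomposes into a direct sum of spin-$s$ irreps classified by Theorem 2.3, and finding which $s$ occur is equivalent to finding highest weight vectors, that is, nonzero vectors killed by $S_+$ together with their $S_3$ eigenvalues. My strategy is to track the multiplicities of $S_3$: eigenvalue $m$ has multiplicity equal to $\#\{(m_1,m_2): m_1+m_2=m,\ |m_i|\leq s_i\}$, which is a tent-shaped function peaking on the interval $[-|s_1-s_2|,|s_1-s_2|]$ and decreasing by one each time $|m|$ increases by one up to $s_1+s_2$. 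The vector $|s_1 s_1\rangle \otimes |s_2 s_2\rangle$ is killed by $S_+ = S_+^{(1)} \otimes 1 + 1 \otimes S_+^{(2)}$ and generates a spin $s_1+s_2$ irrep; removing its weights from the tent leaves a shorter tent with top at $s_1+s_2-1$, yielding a new highest weight vector generating a spin $s_1+s_2-1$ irrep, and iterating peels off spin $s$ irreps for $s = s_1+s_2, s_1+s_2-1, \ldots$.

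The main obstacle is verifying that this peeling terminates exactly at $s = |s_1-s_2|$, and I would handle this by a clean dimension count: the identity $\sum_{s=|s_1-s_2|}^{s_1+s_2}(2s+1) = (2s_1+1)(2s_2+1) = \dim(V_{s_1} \otimes V_{s_2})$ forces each of these irreps to appear with multiplicity one and no others to appear. Consequently, the joint eigenvectors of $S^2$ and $S_3$ furnish the promised basis $|sm\rangle$ with $s \in \{|s_1-s_2|, |s_1-s_2|+1, \ldots, s_1+s_2\}$ and $m \in \{-s, \ldots, s\}$.
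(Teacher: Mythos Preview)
Your proposal is correct and follows essentially the same route as the paper: both dismiss the commutation relations as routine algebra, compute the tent-shaped $S_3$-multiplicity function by counting pairs $(m_1,m_2)$ with $m_1+m_2=m$, and close with the dimension identity $\sum_{s=|s_1-s_2|}^{s_1+s_2}(2s+1)=(2s_1+1)(2s_2+1)$. You are somewhat more explicit than the paper in invoking complete reducibility and the highest-weight peeling mechanism, whereas the paper leaves the passage from the multiplicity count to the list of irreps implicit, but the underlying argument is the same.
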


\begin{proof}
The commutation relations follow from simple algebra.
\smallskip\newline We desire to find simultaneous eigenstates of $S^{2}$ and $S_{3}$.
\newline Observe that for given state $|s_{1}m_{1}\rangle|s_{2}m_{2}\rangle$, we have
$$S_{3}|s_{1}m_{1}\rangle|s_{2}m_{2}\rangle = (m_{1}+m_{2})|s_{1}m_{1}\rangle|s_{2}m_{2}\rangle$$
thus we have $m = m_{1} + m_{2}$, and it suffices to consider the degeneracy of the associated eigenspace for each $m = -(s_{1}+s_{2}), \dots, s_{1}+s_{2}$.
\smallskip\newline Assume $s_{1} \geq s_{2}$ wlog; the number of ordered pairs $(m_{1}, m_{2})$ summing to $m$ with the constraints $|m_{1}| < s_{1}, |m_{2}| < s_{2}$ gives the degeneracy level.
\newline Counting the pairs, we see that the degeneracy level is $2s_{2}+1$ for $|m| \leq s_{1}-s_{2}$, and decreases by $1$ as $|m|$ increases by $1$ for each $s_{1}-s_{2}, \dots, s_{1}+s_{2}$.
\newline Since the states are independent and the total number is
 $$(2(s_{1}-s_{2})+1)(2s_{2}+1) + 2\displaystyle\sum_{n = 1}^{2s_{2}} n = (2s_{1}+1)(2s_{2}+1),$$
these states fill the whole of the Hilbert space spanned by $\{|s_{1}m_{1}\rangle|s_{2}m_{2}\rangle\}$.
\end{proof}

 From the above proof, observe that for each $s = |s_{1}-s_{2}|, \dots, s_{1}+s_{2}$, the total spin operator induces a spin $s$ representation.
By \textbf{Theorem 2.3}, these are irreps of $SU(2)$, thus we have effectively decomposed the tensor product of two irreps of $SU(2)$.
\smallskip\newline Since this is the main result in terms of representation theory, we formally state it below:

\begin{thm}\normalfont
Let $V_{s_{1}}, V_{s_{2}}$ be the $SU(2)$-linear spaces associated to the spin $s_{1}, s_{2}$ \newline representations, respectively.
Then, the decomposition of $V_{s_{1}} \otimes V_{s_{2}}$ is given by
$$V_{s_{1}} \otimes V_{s_{2}} = \displaystyle\bigoplus_{s = |s_{1}-s_{2}|}^{s_{1}+s_{2}} V_{s}$$
where $V_{s}$ is the $SU(2)$-linear space associated to the spin $s$ representation.
\end{thm}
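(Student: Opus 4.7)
The plan is to leverage Proposition 3.2, where the total spin operators were already shown to satisfy the $\mathfrak{su}(2)$ commutation relations. This makes $X \mapsto X^{(1)} \otimes 1 + 1 \otimes X^{(2)}$ a Lie algebra representation of $\mathfrak{su}(2)$ on $W := V_{s_1} \otimes V_{s_2}$, which by Theorem 2.2(ii) (using that $SU(2)$ is simply connected) integrates to a representation of $SU(2)$. The task is then to decompose $W$ into the irreps classified in Theorem 2.3.

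I would proceed by weight-multiplicity counting. Assume $s_1 \geq s_2$ without loss of generality, invoke complete reducibility of finite-dimensional $SU(2)$-representations to write $W = \bigoplus_{s} V_{s}^{\oplus n_s}$, and set $d(m) := \dim \ker(S_3 - m)$ on $W$. By Theorem 2.3 each $V_s$ contributes one basis vector to each weight in $\{-s, \ldots, s\}$, so $d(m) = \sum_{s \geq |m|} n_s$. On the other hand Proposition 3.2 computes $d(m)$ directly: $d(m) = 2s_2+1$ for $|m| \leq s_1-s_2$, and $d(m) = s_1+s_2-|m|+1$ for $s_1-s_2 \leq |m| \leq s_1+s_2$. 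Taking telescoping differences $n_s = d(s) - d(s+1)$ then gives $n_s = 1$ precisely when $|s_1-s_2| \leq s \leq s_1+s_2$ and $n_s = 0$ otherwise, which is the claimed decomposition.

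As an alternative in the constructive spirit of the proof of Theorem 2.3, one could produce the summands explicitly. The state $|s_1 s_1\rangle |s_2 s_2\rangle$ is killed by $S_+$ and is an $S_3$-eigenvector with eigenvalue $s_1+s_2$, so it generates a copy of $V_{s_1+s_2}$ under repeated application of $S_-$. Inside the weight-$(s_1+s_2-1)$ space, which is two-dimensional by Proposition 3.2, a vector orthogonal to $S_-|s_1 s_1\rangle|s_2 s_2\rangle$ is again annihilated by $S_+$ and therefore launches a copy of $V_{s_1+s_2-1}$. Iterating this picking-off procedure at each successive weight yields one copy of each $V_s$ down to $V_{|s_1-s_2|}$, at which point the dimensions $\sum_s (2s+1) = (2s_1+1)(2s_2+1)$ agree.

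The main obstacle is the reducibility input. The excerpt never invokes the unitary trick or Haar-measure averaging on compact $SU(2)$, so the counting argument ultimately rests on a background fact imported from outside the paper. The constructive alternative does not cite complete reducibility by name but implicitly uses an invariant inner product when taking orthogonal complements at each stage. In either case, once that ingredient is granted, Proposition 3.2 carries essentially all of the combinatorial weight of the theorem, and no further calculation with the $b_m$ coefficients of Theorem 2.3 is needed.
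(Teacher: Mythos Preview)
Your proposal is correct and follows essentially the same route as the paper: the theorem is presented there as an immediate consequence of the $S_3$-degeneracy count carried out in the proof of Proposition~3.1 (which you label ``3.2''), and your first argument simply makes explicit the telescoping step $n_s = d(s) - d(s+1)$ that the paper leaves implicit. If anything you are more careful than the paper itself, which silently assumes complete reducibility; your remark that this input is never established in the text is accurate and applies equally to the paper's own argument.
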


Now that we have done the two-particle case, having obtained a new basis in the form $|sm\rangle$,
we may view the tensor product Hilbert space as a direct sum of familiar ones; this allows us to apply the construction of total spin operators inductively.
\smallskip\newline\indent We conclude with a discussion of the transformation of the $|sm\rangle$ basis to the $|s_{1}m_{1}\rangle|s_{2}m_{2}\rangle$ basis.
This topic is of great significance in physics since it gives a relation between the microscopic and macroscopic scales.

\begin{defn}\normalfont
We define the \textbf{Clebsch-Gordan coefficients} $C_{m_{1}m_{2}m}^{s_{1}s_{2}s}$ as follows:
$$|sm\rangle = \displaystyle\sum_{(m_{1}, m_{2})} C_{m_{1}m_{2}m}^{s_{1}s_{2}s}|s_{1}m_{1}\rangle|s_{2}m_{2}\rangle$$
Note that $C_{m_{1}m_{2}m}^{s_{1}s_{2}s} = 0$ unless $m_{1}+m_{2}=m$, considering action by $S_{3}$.
\end{defn}

I will guide you through an elementary example of calculating these coefficients, then tell you the general method.

\begin{ex}\normalfont
Take two spin $\frac{1}{2}$ particles; denote $|\frac{1}{2}\frac{1}{2}\rangle = |\!\uparrow\rangle, |\frac{1}{2}\,\text{-}\frac{1}{2}\rangle = |\!\downarrow\rangle$.
\newline These are the so-called spin `up' and `down' states, applicable to any spin $\frac{1}{2}$ particle \newline (in particular, all known fermions).
\smallskip\newline We desire to find the transformation $\{|\!\uparrow\uparrow\rangle, |\!\uparrow\downarrow\rangle, |\!\downarrow\uparrow\rangle, |\!\downarrow\downarrow\rangle\} \rightarrow \{|11\rangle, |10\rangle, |1-1\rangle, |00\rangle\}$.
\newline Starting with the top state of highest spin, we have $|11\rangle = |\!\uparrow\uparrow\rangle$.
$$S_{-}|11\rangle = \sqrt{2}|10\rangle = (S_{-}^{(1)} \otimes 1 + 1 \otimes S_{-}^{(2)})|\!\uparrow\uparrow\rangle = |\!\downarrow\uparrow\rangle + |\!\uparrow\downarrow\rangle$$
$$S_{-}|10\rangle = \sqrt{2}|1-1\rangle = (S_{-}^{(1)} \otimes 1 + 1 \otimes S_{-}^{(2)})\frac{1}{\sqrt{2}}(|\!\downarrow\uparrow\rangle + |\!\uparrow\downarrow\rangle) = \sqrt{2}|\!\downarrow\downarrow\rangle$$
Solve for $|00\rangle$ using the orthogonality condition $\langle10|00\rangle = 0$.
Conventionally, we choose the sign so that the state with the highest $m_{1}$ (assuming $s_{1} \geq s_{2}$) is positive.
$$|11\rangle = |\!\uparrow\uparrow\rangle, |10\rangle = \frac{1}{\sqrt2}(|\!\uparrow\downarrow\rangle + |\!\downarrow\uparrow\rangle), |1-1\rangle = |\!\downarrow\downarrow\rangle, |00\rangle = \frac{1}{\sqrt{2}}(|\!\uparrow\downarrow\rangle - |\!\downarrow\uparrow\rangle)$$
\end{ex}

For the general case, start with the topmost state $|(s_{1}+s_{2})(s_{1}+s_{2})\rangle = |s_{1}s_{1}\rangle|s_{2}s_{2}\rangle$, as with the example.
Climb down the ladder until you reach the bottom rung; symmetry of the ladder indicates that the coefficients are symmetric about the $m = 0$ rung:
$$\langle s(m-1)|S_{-}|sm\rangle = \langle s(-m+1)|S_{+}|s-m\rangle$$
which saves you half the calculations. Moving onto the total spin $s_{1}+s_{2}-1$ states, use the orthogonality condition on $|(s_{1}+s_{2}-1)(s_{1}+s_{2}-1)\rangle$ and normalize, putting the plus sign on the `upper' state.
Climb down the ladder once more to retreive all the coefficients for this spin.
For each time you step down a spin, you get one more orthogonality condition to work with; rinse and repeat.

\smallskip The general Clebsch-Gordan coefficients can be derived in explicit form using this method, since we already know our starting point at the topmost rung.
It is not a pleasing calculation to work out, but if you want to see the general form, refer to [B], page 171, and the calculations leading up to (2.41).
\newline\indent Here is a particular general case of this form, where we have a system consisting of a particle with arbitrary spin and a particle with spin $\frac{1}{2}$.

\begin{thm}\normalfont
Let $s_{1}$ be any half-integer, $s_{2} = \frac{1}{2}$. Then, $s = s_{1}\pm\frac{1}{2}$, and
$$|sm\rangle = \sqrt{\frac{s_{1}\pm m+1/2}{2s_{1}+1}}\,|s_{1}(m-\tfrac{1}{2})\rangle|\tfrac{1}{2}\tfrac{1}{2}\rangle \pm\sqrt{\frac{s_{1}\mp m+1/2}{2s_{1}+1}}\,|s_{1}(m+\tfrac{1}{2})\rangle|\tfrac{1}{2}\,\text{-}\tfrac{1}{2}\rangle.$$
\end{thm}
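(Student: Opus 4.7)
The plan is to use the ladder method outlined just before the theorem statement and verify the closed-form expression by induction on $m$ within each $s$-ladder. The remark following the Clebsch-Gordan definition (forcing $m_1+m_2=m$), together with $m_2\in\{\pm\tfrac{1}{2}\}$, pins the ansatz to
$$|sm\rangle = A_m^{(s)}\,|s_1,m-\tfrac{1}{2}\rangle|\tfrac{1}{2}\tfrac{1}{2}\rangle + B_m^{(s)}\,|s_1,m+\tfrac{1}{2}\rangle|\tfrac{1}{2}\,\text{-}\tfrac{1}{2}\rangle,$$
and by Theorem 3.2 only $s\in\{s_1-\tfrac{1}{2},s_1+\tfrac{1}{2}\}$ can occur, so exactly two ladders must be determined.

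For the upper ladder $s=s_1+\tfrac{1}{2}$, I would start from the unique highest-weight state $|s_1 s_1\rangle|\tfrac{1}{2}\tfrac{1}{2}\rangle$, which agrees with the claim at $m=s_1+\tfrac{1}{2}$ (the second coefficient vanishes, the first equals $1$). Applying $S_-=S_-^{(1)}\otimes 1+1\otimes S_-^{(2)}$ to the ansatz and using Theorem 2.4 on each factor, one matches coefficients of the two basis-vector types on both sides of $S_-|sm\rangle=\sqrt{s(s+1)-m(m-1)}\,|s,m-1\rangle$ to obtain a two-term recursion relating $(A_{m-1}^{(s)},B_{m-1}^{(s)})$ to $(A_m^{(s)},B_m^{(s)})$. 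The key algebraic trick is the identity $s_1(s_1+1)-\mu(\mu-1) = (s_1+\mu)(s_1-\mu+1)$, applied with $\mu = m\pm\tfrac{1}{2}$; it collapses the square roots in the ladder action into products that match the factored form inside the claim, so the inductive step reduces to the elementary identity $1+(s_1-m+\tfrac{1}{2}) = s_1-m+\tfrac{3}{2}$.

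For the lower ladder $s=s_1-\tfrac{1}{2}$, the top state $|(s_1-\tfrac{1}{2})(s_1-\tfrac{1}{2})\rangle$ is characterized up to unit scalar in the two-dimensional $m=s_1-\tfrac{1}{2}$ weight space as the vector orthogonal to the $|(s_1+\tfrac{1}{2})(s_1-\tfrac{1}{2})\rangle$ computed in the previous step. Normalizing and fixing the overall sign by convention recovers the $m=s_1-\tfrac{1}{2}$ case of the claim with the lower sign, and the same ladder recursion, verified by the same identity, propagates the formula to all $m$.

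The main obstacle is really just spotting the identity $s_1(s_1+1)-\mu(\mu-1)=(s_1+\mu)(s_1-\mu+1)$, which is what makes everything factor cleanly; once it is in hand the rest is routine bookkeeping, though care is required to ensure that the sign chosen for the top of the lower ladder propagates consistently through successive applications of $S_-$.
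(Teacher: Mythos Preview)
Your argument is correct but takes a genuinely different route from the paper. You carry out the general ladder algorithm described just before the theorem: fix the top state of each $s$-ladder (by uniqueness of the highest weight for $s=s_1+\tfrac12$, and by orthogonality for $s=s_1-\tfrac12$), then descend by $S_{-}$ and verify the closed form by induction on $m$, using the factoring identity $s_1(s_1+1)-\mu(\mu-1)=(s_1+\mu)(s_1-\mu+1)$ to collapse the square roots. The paper instead fixes $m$ and diagonalizes $S^{2}$ directly on the two-dimensional weight space: writing $S^{2}=S_{-}S_{+}+S_{3}^{2}+S_{3}$, it computes the $2\times 2$ matrix of $S^{2}$ in the basis $\{\,|s_1,m-\tfrac12\rangle|\tfrac12\tfrac12\rangle,\ |s_1,m+\tfrac12\rangle|\tfrac12\,\text{-}\tfrac12\rangle\,\}$, solves the characteristic equation $(x-s(s+1))^{2}-x=0$ with $x=(s_1+\tfrac12)^{2}$ to recover $s=s_1\pm\tfrac12$, and then reads off the eigenvectors $A,B$ for each eigenvalue. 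Your method is the all-purpose one and makes the recursive structure transparent; the paper's method exploits the special feature of $s_2=\tfrac12$ that every weight space has dimension at most $2$, so a single $2\times 2$ eigenvalue problem handles both ladders and every $m$ simultaneously, with no induction needed.
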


\begin{proof}
Have $|sm\rangle = A|s_{1}(m-\tfrac{1}{2})\rangle|\tfrac{1}{2}\tfrac{1}{2}\rangle + B|s_{1}(m+\tfrac{1}{2})\rangle|\tfrac{1}{2}\,\text{-}\tfrac{1}{2}\rangle$.
\smallskip\newline Write $S^{2} = S_{+}S_{-} + S_{3}^{2} - S_{3} = S_{-}S_{+} + S_{3}^{2} + S_{3}$.
\newline We desire to find $A, B$ such that $|sm\rangle$ is an eigenstate of $S^{2}$ with eigenvalue $s(s+1)$.
\newline Setting $x = (s_{1}+\tfrac{1}{2})^{2}, y=\sqrt{x-m^{2}}$ for convenience, we have
$$S^{2}|s_{1}(m-\tfrac{1}{2})\rangle|\tfrac{1}{2}\tfrac{1}{2}\rangle = (x+m)|s_{1}(m-\tfrac{1}{2}\rangle|\tfrac{1}{2}\tfrac{1}{2}\rangle + y|s_{1}(m+\tfrac{1}{2}\rangle|\tfrac{1}{2}\,\text{-}\tfrac{1}{2}\rangle$$
$$S^{2}|s_{1}(m+\tfrac{1}{2})\rangle|\tfrac{1}{2}\,\textbf{-}\tfrac{1}{2}\rangle = (x-m)|s_{1}(m+\tfrac{1}{2}\rangle|\tfrac{1}{2}\,\text{-}\tfrac{1}{2}\rangle + y|s_{1}(m-\tfrac{1}{2}\rangle|\tfrac{1}{2}\tfrac{1}{2}\rangle$$
$$S^{2}|sm\rangle = s(s+1)|sm\rangle = ((x+m)A+yB)|s(m-\tfrac{1}{2})\rangle|\tfrac{1}{2}\tfrac{1}{2}\rangle + (yA+(x-m)B)|s(m+\tfrac{1}{2})\rangle|\tfrac{1}{2}\,\text{-}\tfrac{1}{2}\rangle$$
$$(x+m)A+yB = s(s+1)A,\,\, yA + (x-m)B = s(s+1)B$$
The two conditions should be one and the same; with some more algebra, we have
$$(x-s(s+1))^{2}-x = 0,\, \text{or}\,\, s = s_{1}\pm\tfrac{1}{2}$$
All that is left to do is normalize and set the sign of $A$ to be positive.
\end{proof}

\smallskip We end on a physical note by introducing the concept of entanglement.

\begin{defn}\normalfont
A linear combination of states is said to be \textbf{entangled} if it cannot be written as a single tensor product of states.
\end{defn}

What this means in our physical context is that, once we obtain a certain result for particle A (i.e. we `observe' it),
the state of particle B has been determined, regardless of any external conditions concerning the two particle system.
The two particles cannot exist independently of each other, hence the term `entanglement'.
Let this sink in for a second. If Alice observes particle A on the sun, then Bob sees particle B in a fixed state on the Earth from the very time frame that Alice observed particle A.
\smallskip\newline\indent How can this be? The system cannot send information faster than light, but somehow the particles `knows' instantly what has happened to its partner.
This phenomenon is the basis for the famous \textbf{EPR paradox}, and arose a feverous discussion on causality of events and hidden variable theory, but that is beyond what I can hope to cover here.

\smallskip Let's get back on track. I'll give you a simple example.

\begin{ex}\normalfont
Recall the system in the previous example consisting of two spin $\tfrac{1}{2}$ particles.
\smallskip It is easy to see that the states with $m=0$ are entangled:
$$|10\rangle = \frac{1}{\sqrt2}(|\!\uparrow\downarrow\rangle + |\!\downarrow\uparrow\rangle),\, |00\rangle = \frac{1}{\sqrt{2}}(|\!\uparrow\downarrow\rangle - |\!\downarrow\uparrow\rangle)$$
We already know that the tensor product must be of in a form such that $m_{1}+m_{2} = m$, but the forms written above are the exact linear combination of such states.
\smallskip\newline Suppose we measure the state $|10\rangle$ with $S_{3}^{(1)}$, which returns $m_{1}$:
\newline we obtain $\tfrac{1}{2}$ with probability $\tfrac{1}{2}$ and $\text{-}\tfrac{1}{2}$ with probability $\tfrac{1}{2}$.
\newline Then, for each such measurement on particle $1$, we know $m_{2}$ without measuring it!
\end{ex}

For any wavefunction consisting of a linear combination of states $|sm\rangle$, by using the Clebsch-Gordan coefficients, we can decompose all of them into linearly independent basis states of the form $|s_{1}m_{1}\rangle|s_{2}m_{2}\rangle$.
By measuring these states accordingly with the spin operators in the Hilbert space of each particle, we may analyze the entanglement problem associated with the original state.
\newline\indent We refer to measuring such a state as `collapsing the wave function': once we restrict the component $|s_{1}m_{1}\rangle$, we are simply left with a linear combination of $|s_{2}m_{2}\rangle$ states, and the wavefunction is no longer entangled.

\bigskip

\end{document}